\numberwithin{equation}{section}
\let\cal\mathcal
\def\Gscr{{\cal G}}
\def\Lscr{{\cal L}}
\def\Oscr{{\cal O}}
\def\Tscr{{\cal T}}
\def\Yscr{{\cal Y}}
\let\blb\mathbb
\def \AA{{\blb A}}
\def \ZZ{{\blb Z}}
\def \NN{{\blb N}}
\def \RR{{\blb R}}
\def\quot{/\!\!/}
\def\Gr{\operatorname{Gr}}
\def\Lie{\mathop{\text{Lie}}}
\def\Qch{\operatorname{Qch}}
\def\Ext{\operatorname {Ext}}
\def\uEnd{\operatorname {\mathcal{E}\mathit{nd}}}
\def\End{\operatorname {End}}
\def\RHom{\operatorname {RHom}}
\def\End{\operatorname {End}}
\def\r{\rightarrow}
\DeclareMathOperator{\Sym}{Sym}
\theoremstyle{definition}
\newtheorem{lemma}{Lemma}[section]
\newtheorem{proposition}[lemma]{Proposition}
\newtheorem{theorem}[lemma]{Theorem}
\newtheorem{remark}[lemma]{Remark}
\DeclareMathOperator\Hom{Hom}
\def\s{\mathbb{S}}
\newcommand*\bigcdot{\mathpalette\bigcdot@{.5}}
\newcommand*\bigcdot@[2]{\mathbin{\vcenter{\hbox{\scalebox{#2}{$\m@th#1\bullet$}}}}}
\newcommand{\pushright}[1]{\ifmeasuring@#1\else\omit\hfill$\displaystyle#1$\fi\ignorespaces}
\newcommand{\pushleft}[1]{\ifmeasuring@#1\else\omit$\displaystyle#1$\hfill\fi\ignorespaces}
\definecolor{ruta2}{rgb}{0.409, 0.459, 0.208}
\mathchardef\mhyphen="2D
\newcounter{todocounter}
\DeclareDocumentCommand\addreference{g}{\stepcounter{todocounter}\todo[color = blue!30, fancyline]{\thetodocounter. Add reference\IfNoValueF{#1}{: #1}}\xspace}
\DeclareDocumentCommand\checkthis{g}{\stepcounter{todocounter}\todo[color = red!50, fancyline]{\thetodocounter. Check this\IfNoValueF{#1}{: #1}}\xspace}
\DeclareDocumentCommand\fixthis{g}{\stepcounter{todocounter}\todo[color = orange!50, fancyline]{\thetodocounter. Fix this\IfNoValueF{#1}{: #1}}\xspace}
\DeclareDocumentCommand\expand{g}{\stepcounter{todocounter}\todo[color = green!50, fancyline]{\thetodocounter. Expand\IfNoValueF{#1}{: #1}}\xspace}
\newcommand{\mylabel}[2]{#2\def\@currentlabel{#2}\label{#1}}
\title[]{Tilting bundles on hypertoric varieties} \author[\v{S}pela
\v{S}penko and Michel Van den Bergh]{\v{S}pela \v{S}penko and Michel
  Van den Bergh} \thanks{The first author is a FWO $[$PEGASUS$]^2$
  Marie Sk\l odowska-Curie fellow at the Free University of Brussels
  (funded by the European Union Horizon 2020 research and innovation
  programme under the Marie Sk\l odowska-Curie grant agreement No
  665501 with the Research Foundation Flanders (FWO)). During part of
  this work she was also a postdoc with Sue Sierra at the University
  of Edinburgh.}  \address{Departement Wiskunde, Vrije Universiteit
  Brussel, Pleinlaan $2$, B-1050 Elsene}
\email[]{spela.spenko@vub.be}
\address{Departement WNI, Universiteit Hasselt, Universitaire Campus \\
  B-3590 Diepenbeek} \email[]{michel.vandenbergh@uhasselt.be}
\thanks{The second author is a senior researcher at the Research
  Foundation Flanders (FWO).  While working on this project he was
  supported by the FWO grant G0D8616N: ``Hochschild cohomology and
  deformation theory of triangulated categories''.}
\keywords{Invariant theory, tilting bundle, noncommutative resolution}
\subjclass{13A50, 14M15, 32S45}
\let\oldmarginpar\marginpar
\def\marginpar#1{\oldmarginpar{\raggedright \tiny #1}}
\begin{document}
\begin{abstract}
Recently McBreen and Webster constructed a tilting bundle
on a smooth hypertoric variety (using reduction to finite characteristic)
and showed that its endomorphism ring is Koszul.

  In this short note we present alternative proofs for these results.
We simply observe that the 
  tilting bundle constructed by Halpern-Leistner and Sam on a generic open GIT substack of the ambient
 linear space
 restricts to a tilting bundle on the hypertoric variety.  
The fact that the hypertoric variety is defined by a quadratic regular sequence then also yields an easy proof of Koszulity.
\end{abstract}

\maketitle
\section{Introduction}
Below $k$ is an algebraically closed ground field of characteristic $0$. 
Let $T$ be torus and let $W$ be a symplectic representation of $T$.
Then $W$ is equipped with a canonical moment map\footnote{If $\langle-,-\rangle$ is  the symplectic bilinear form on $W$, 
then one has $\mu(w)(v)=(1/2)\langle vw,w\rangle$ for $w\in W,v\in \mathfrak{t}$.
}
 $\mu:W\r \mathfrak{t}^\ast$ with $\mathfrak{t}=\Lie(T)$. 
\emph{Throughout we assume that 
 the action of $T$ is faithful} which implies that $\mu$ is surjective and flat.

\medskip

Let $X(T)$ be the characters of $T$. For $\chi\in X(T)$ let
$W^{ss,\chi}$ be the semi-stable part of $W$ with respect to the
linearization $\chi \otimes \Oscr_W$.  Recall that $X(T)_\RR$ is
equipped with a so-called secondary fan such that $\chi$ is in the interior of a
maximal cone if and only if $W^{ss,\chi}/T$ is a (smooth)
Deligne-Mumford
stack \cite[Theorem 14.3.14]{CoxLittleSchenck}.  We will call such $\chi$ \emph{generic}. For generic $\chi$, the DM stack $W^{ss,\chi}/T$ is a crepant resolution of the GIT quotient $W\quot T$ \cite[Lemma A.2 and its proof]{SVdB5}.
Building on the methods developed in \cite{SVdB} Halpern-Leistner and Sam constructed a tilting bundle on $W^{ss,\chi}/T$ \cite{HLSam}.
See Theorem \ref{th:HLS} below.

\medskip

For $\xi\in \mathfrak{t}^\ast$ the \emph{hypertoric variety}
associated to the data $(\chi,\xi)$ is the GIT quotient
$\mu^{-1}(\xi)^{ss,\chi}\quot T$. For $\chi$ generic,
$\mu^{-1}(\xi)^{ss,\chi}/T$ is also a smooth Deligne-Mumford stack
which is a crepant resolution of the hypertoric variety
$\mu^{-1}(\xi)\quot T$ (see \S\ref{sec:proofs}).

The following is our main result.
\begin{theorem} \label{th:mainth}
Let $\chi$ be generic and let $\Tscr$ be the tilting bundle on $W^{ss,\chi}/T$ constructed in 
\cite{HLSam} (see Theorem \ref{th:HLS}).
\begin{enumerate}
\item \label{mainth1}
For $\xi\in \mathfrak{t}^\ast$ the restriction $\Tscr_\xi$ of $\Tscr$ to $\mu^{-1}(\xi)^{ss,\chi}/T$ is a tilting bundle.
\item \label{mainth1.5}
Put $\Lambda=\End_{W^{ss,\chi}/T}(\Tscr)$, $\Lambda_\xi=\End_{\mu^{-1}(\xi)^{ss,\chi}/T}(\Tscr_{\xi})$.  
Then $\Lambda_\xi$ is the quotient of $\Lambda$
by the defining relations of $\mu^{-1}(\xi)$ which form a regular sequence.
\item \label{mainth2}
$\Lambda_\xi$ is a ``non-commutative crepant resolution'' \cite{VdB04} of 
$k[\mu^{-1}(\xi)\quot T]$. 
\item \label{mainth3}
$\Lambda_0$ is Koszul when graded using
the dilation $G_m$-action on $W$.
\end{enumerate}
\end{theorem}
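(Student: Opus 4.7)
\emph{Proof plan for \eqref{mainth3}.} The plan is to reduce Koszulity of $\Lambda_0$ to Koszulity of $\Lambda$ by exhibiting $\Lambda_0$ as the quotient of $\Lambda$ by a regular sequence of central elements of degree $2$.

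The key observation is that the symplectic form on $W$ is bilinear, so every component $\mu^*(v)$ for $v\in\mathfrak{t}$ is quadratic in $W$, i.e.\ lies in $k[W]_2$ with respect to the dilation grading (coordinates in degree $1$). This grading is inherited by $\Lambda=\End(\Tscr)$: the tilting bundle $\Tscr$ is a direct sum of $T$-equivariant line bundles $\Oscr(\lambda)$ for $\lambda$ in the \cite{HLSam} window $L$, so $\Hom(\Oscr(\lambda),\Oscr(\mu))$ identifies with the $(\mu-\lambda)$-weight space of $k[W]$ and multiplication is compatible. Being $T$-invariant, the $\mu^*(v)$ act centrally on every weight space, hence determine central elements $\bar z_v\in\Lambda_2$; by \eqref{mainth1.5} the $\bar z_v$ form a regular sequence in $\Lambda$ whose quotient is $\Lambda_0$.

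Second, I would verify that $\Lambda$ itself is Koszul in this grading. In the above description $\Lambda$ is generated in dilation-degree $1$ over the semisimple degree-$0$ part $\bigoplus_{\lambda\in L} k e_\lambda$, with quadratic commuting relations inherited from $\Sym(W^*)$; Koszulity for such ``windowed'' quadratic algebras attached to linear GIT is classical and is essentially already present in the \cite{HLSam} picture (in the toric case considered here it reduces to Koszulity of a quiver algebra with toric commutation relations).

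Finally, I would apply the standard fact that a Koszul algebra modulo a regular sequence of central degree-$2$ elements is again Koszul. Concretely, splicing the $\Lambda$-linear Koszul complex $\Lambda\otimes \bigwedge^{\bullet}\langle \bar z_v\rangle_{v\in\mathfrak{t}}\to \Lambda_0$ with the linear resolution of $\bigoplus_{\lambda\in L} k e_\lambda$ over $\Lambda$ produces a linear resolution of this semisimple part over $\Lambda_0$, establishing Koszulity of $\Lambda_0$. The principal obstacle is the second step, the Koszulity of $\Lambda$ in the dilation grading; once this is confirmed, the passage from $\Lambda$ to $\Lambda_0$ is a routine application of classical Koszul calculus.
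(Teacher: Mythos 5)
Your reduction strategy founders on its second step: $\Lambda$ is \emph{not} Koszul in the dilation grading, except in trivial cases. This is stated explicitly in the paper (see the first Remark after Theorem \ref{th:mainth}): for the conifold, i.e.\ $G_m$ acting with weights $1,1,-1,-1$ on a $4$-dimensional symplectic $W$, the algebra $\Lambda$ already has \emph{cubic} relations, so it is not even quadratic. Your heuristic that $\Lambda$ is ``generated in dilation-degree $1$ over $\bigoplus_\lambda ke_\lambda$ with quadratic commuting relations'' is where things go wrong: the Hom-space between two line bundles in the window is the full $T$-weight space $k[W]_{\mu-\lambda}$, whose minimal generators as a module over the lower pieces can sit in arbitrarily high dilation degree, and the relations among them need not be quadratic. (The first and third steps of your plan are fine: $\Lambda_0=\Lambda/(\bar z_v)$ with the $\bar z_v$ central of degree $2$ forming a regular sequence is exactly part \eqref{mainth1.5}, and the fact that a Koszul algebra modulo a central quadratic regular sequence is Koszul is classical. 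But the hypothesis you would feed into that fact fails.)

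The paper proves \eqref{mainth3} by an entirely different route that never needs Koszulity of $\Lambda$: it establishes a criterion (Proposition \ref{prop:koszul}) that an $\NN$-graded homologically homogeneous algebra $\Delta$ with semisimple degree-zero part, finite over a central subring of Krull dimension $d$, is Koszul if and only if its dualizing bimodule $\omega_\Delta$ is generated in degree $d$. Since $\Lambda_0$ is an NCCR of $R_0=k[\mu^{-1}(0)\quot T]$ by part \eqref{mainth2}, one computes $\omega_{R_0}=R_0(-d)$ via the adjunction formula --- here the quadraticity of the moment map enters, but only through the degree shift $2s$ coming from the quadratic regular sequence of length $s$ cutting out $\mu^{-1}(0)$ --- and concludes $\omega_{\Lambda_0}=\Hom_{R_0}(\Lambda_0,\omega_{R_0})=\Lambda_0(-d)$. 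The same criterion applied to $\Lambda$ gives $\omega_\Lambda=\Lambda(-2e)$ while $\dim W\quot T=2e-s<2e$, which is precisely why $\Lambda$ fails to be Koszul; so no argument of your reduction type can work. If you want to salvage a reduction-style proof you would have to find some other Koszul algebra surjecting onto $\Lambda_0$ with quadratic regular kernel, and there is no natural candidate here.
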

The fact that $\mu^{-1}(\xi)^{ss,\chi}/T$ admits a tilting bundle with
Koszul endomorphism ring when $\xi=0$ was proved in
\cite{McBreenWebster} using the Bezrukavnikov-Kaledin method based on
reduction mod $p$.
\begin{remark} 
$\Lambda$ is an NCCR for $W\quot T$ (see Remark \ref{rem:NCCR}) but it is not Koszul, except in trivial cases. This follows from Proposition \ref{prop:koszul} 
applied in the same way as in the proof of Theorem \ref{th:mainth}\eqref{mainth3}.
For example in the case of the conifold ($G_m$ acting with weights $1,1,-1,-1$ on a 4-dimensional representation) $\Lambda$
has cubic relations. This is in fact expected as explained in the next remark.
\end{remark}
\begin{remark}
From the fact that $\Lambda_0$ is Koszul one obtains in particular that it is quadratic. 
Using the explicit form of $\Tscr$ (see Theorem \ref{th:HLS})
one  then quite easily obtains a quiver presentation of $\Lambda_0$. See \cite[Corollary 3.18]{McBreenWebster}. This presentation
of $\Lambda_0$ may be lifted to a quadratic non-homogeneous presentation of $\Lambda$ as $\Sym(\mathfrak{t})$-algebra (with $|\mathfrak{t}|=2$). This then yields a more complicated presentation
of $\Lambda$ as $k$-algebra, possibly involving extra quadratic generators and cubic and even quartic relations.
\end{remark}
Similar results as those of McBreen and Webster have been announced by Tatsuyuki Hikita \cite{Hikita}.
\section{Acknowledgement}
This note is mostly the result of a discussion with Travis Schedler 
during the conference  
``Quantum geometric and algebraic representation theory''
at the Hausdorff Institute in October 2017, which was attended by both authors.  We are grateful to Travis for his interest in this work. We would also like to thank Theo Raedschelders for drawing our attention to the work of McBreen and Webster.
\section{The tilting bundle on $W^{ss,\chi}/T$}
\label{sec:tilting}
Recall that a tilting bundle on an algebraic stack $\Yscr$ is a vector bundle $\Tscr$ such that 
$\Ext^{>0}_\Yscr(\Tscr,\Tscr)=0$ and such that $\Tscr$  generates $D_{\Qch}(\Yscr)$ in the sense that $\Tscr^\perp=0$.

\medskip

For benefit of the reader we now describe explicitly the tilting bundle on $W^{ss,\chi}/T$ constructed in \cite{HLSam}. The construction in fact only requires
that $W$ is \emph{quasi-symmetric} \cite{SVdB}, i.e. the sum of weights of $W$ on each line
through the origin is zero.

Let $(\beta_i)_{i=1}^{2e}$ be the $T$-weights of $W$.
Let $\varepsilon\in X(T)$. Put
\[
\Sigma=\left\{\sum_i a_i\beta_i\mid a_i\in ]-1,0]\right\}\subset X(T)_\RR, \quad\quad \bar{\Sigma}_\varepsilon=\bigcup_{r>0}\bar{\Sigma}\cap (r\varepsilon +\bar{\Sigma}).
\]
The following result indicates the combinatorial significance of the zonotope $\bar{\Sigma}$.
\begin{proposition}[{\cite[Proposition 2.1]{HLSam}}]
Assume that $W$ is quasi-symmetric. 
A character $\chi\in X(T)$ is  generic if and only if it is not parallel to any face of $\bar{\Sigma}$.
\end{proposition}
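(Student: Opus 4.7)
The plan is to reduce both sides of the equivalence to the common reformulation that $\chi$ is generic (respectively, not parallel to any proper face of $\bar\Sigma$) if and only if $\chi$ does not lie in any proper $\RR$-linear subspace of $X(T)_\RR$ spanned by some subset of the weights $\{\beta_i\}$.

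First I would handle the genericity side via the standard toric GIT description of the secondary fan. Its chambers are the connected components of the complement of the finite hyperplane arrangement formed by the hyperplanes in $X(T)_\RR$ that are spanned by a linearly independent $(r-1)$-element subset of the weights, where $r=\dim T$. Faithfulness of the $T$-action implies that the $\beta_i$ span $X(T)_\RR$, so any proper subspace spanned by a subset of weights can be enlarged to a weight-spanned hyperplane; hence $\chi$ is generic precisely when it avoids every proper weight-spanned subspace. Next I would analyze the proper faces of $\bar\Sigma=\sum_i[-1,0]\beta_i$. Every proper face is the locus where some nonzero linear functional $\phi:X(T)_\RR\to\RR$ attains its maximum on $\bar\Sigma$, and splitting indices by the sign of $\phi(\beta_i)$ shows this face to be a translate of the sub-zonotope $\sum_{\phi(\beta_i)=0}[-1,0]\beta_i$; in particular it is parallel to $V_\phi:=\mathrm{span}_\RR\{\beta_i:\phi(\beta_i)=0\}$, which is proper because $\phi\neq 0$ and the $\beta_i$ span $X(T)_\RR$. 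Conversely, given any proper weight-spanned subspace $V$, any nonzero $\phi$ vanishing on $V$ satisfies $V\subseteq V_\phi\subsetneq X(T)_\RR$, so every $\chi\in V$ is parallel to the corresponding face. Combining, $\chi$ is parallel to some proper face of $\bar\Sigma$ iff it lies in some proper weight-spanned subspace, matching the reformulation above.

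The only non-combinatorial input is the toric GIT characterization of walls of the secondary fan in the first step, which should either be cited from a standard reference or recalled briefly; everything else is elementary sorting of indices by the sign of $\phi(\beta_i)$. Quasi-symmetry plays no role in this combinatorial equivalence; it only ensures that $\bar\Sigma$ is centrally symmetric about the origin, which is useful elsewhere in the paper but unnecessary here.
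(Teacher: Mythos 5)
This proposition is only quoted in the paper (it is \cite[Proposition 2.1]{HLSam}; no proof is given here), so your argument can only be assessed on its own terms. Its overall shape --- reducing both sides to ``$\chi$ lies in no proper subspace spanned by a subset of the weights $\beta_i$'', using the chamber description of the secondary fan on one side and the description of the proper faces of a zonotope as translated sub-zonotopes on the other --- is the right one, and your analysis of the faces of $\bar{\Sigma}$ is correct.

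The genuine problem is your closing assertion that quasi-symmetry plays no role: it is false, and the place where it is silently used is exactly the ``standard toric GIT'' input of your first step. In general the chambers of the secondary fan are \emph{not} the connected components of the complement of the arrangement of weight-spanned hyperplanes; they are the components, inside the relative interior of the support cone $\operatorname{Cone}(\beta_1,\dots,\beta_{2e})$, of the complement of the union of the cones $\operatorname{Cone}(\beta_i : i\in S)$ over non-spanning subsets $S$. These cones are in general strictly smaller than the subspaces they span, and the support cone need not be all of $X(T)_\RR$. Concretely, for $T=G_m$ acting on $k^2$ with both weights equal to $1$ (not quasi-symmetric), $\bar{\Sigma}=[-2,0]$ has only points as proper faces, so no nonzero $\chi$ is parallel to a proper face; yet one of the two nonzero half-lines of $X(T)_\RR$ lies outside the support of the secondary fan and hence consists of non-generic characters --- the proposition itself fails. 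What rescues the argument under quasi-symmetry is that every line through the origin containing a nonzero weight contains weights summing to zero, hence weights on both rays; therefore the weights positively span every subspace that they span linearly. Combined with faithfulness this gives both that the support cone is all of $X(T)_\RR$ and that $\operatorname{Cone}(\beta_i:\beta_i\in V)=V$ for every weight-spanned subspace $V$, which is what converts the union of non-spanning cones into the union of weight-spanned hyperplanes. You should insert this observation where you invoke the chamber description (or argue directly via King's criterion that whenever $\chi\in V$ for a proper weight-spanned subspace $V$, a point supported exactly on the weights in $V$ is $\chi$-semistable with positive-dimensional stabilizer), and delete the claim that quasi-symmetry is unnecessary.
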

For $\varepsilon \in X(T)$ put
$
\Lscr_{\varepsilon}=X(T)\cap (1/2)\bar{\Sigma}_\epsilon
$.
The following is one of the main results of \cite{HLSam}.
\begin{theorem}[{\cite[Corollary 4.2]{HLSam}.}]
\label{th:HLS}
Assume that $W$ is quasi-symmetric and that
$\chi\in X(T)$ is generic. Then for any generic $\varepsilon\in X(T)$
\[
\Tscr=\bigoplus_{\mu\in\Lscr_\varepsilon}\Oscr_{W^{ss,\chi}}\otimes_k \mu
\]
defines a tilting 
 bundle on $W^{ss,\chi}/T$.
\end{theorem}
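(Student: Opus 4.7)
The plan is to follow the ``magic window'' strategy pioneered in \cite{SVdB}: produce a full triangulated subcategory $\Cscr_\varepsilon \subset D^b(W/T)$ generated by the line bundles $\Oscr_W\otimes \mu$, $\mu\in \Lscr_\varepsilon$, and show that restriction along the open immersion $W^{ss,\chi}/T \hookrightarrow W/T$ induces an equivalence $\Cscr_\varepsilon \simeq D^b(W^{ss,\chi}/T)$. Tilting of $\Tscr$ then reduces to the two statements that (i) $\{\Oscr_W\otimes \mu\}_{\mu\in \Lscr_\varepsilon}$ is an exceptional, self-Ext-vanishing collection after restriction, and (ii) its triangulated hull after restriction is all of $D^b(W^{ss,\chi}/T)$.

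\textbf{Step 1 (Ext vanishing).} Since $\Tscr$ is a direct sum of line bundles, computing $\Ext^{>0}$ reduces to checking $H^{>0}(W^{ss,\chi}/T,\Oscr_{W^{ss,\chi}}\otimes \nu)=0$ for every weight $\nu$ lying in $\Lscr_\varepsilon-\Lscr_\varepsilon$. Using the Kempf-Ness stratification $W\setminus W^{ss,\chi}=\bigsqcup S_\alpha$ with destabilizing one-parameter subgroups $\lambda_\alpha$, one writes $H^\ast(W^{ss,\chi}/T,\Oscr\otimes\nu)$ as the cone over local-cohomology contributions $R\Gamma_{S_\alpha}(W,\Oscr\otimes\nu)^T$. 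Each such contribution is a Koszul-type complex on the conormal bundle of $S_\alpha$ and its $T$-invariant part vanishes in positive degree as soon as $\langle \lambda_\alpha,\nu\rangle$ lies in the open interval $(-\eta_\alpha,\eta_\alpha)$, where $\eta_\alpha$ is half the sum of absolute values of $\lambda_\alpha$-weights on the normal bundle. For quasi-symmetric $W$, the inclusion $\Lscr_\varepsilon \subset (1/2)\bar{\Sigma}_\varepsilon$ and the convexity/centering properties of zonotopes translate into exactly this window estimate for every difference $\nu=\mu'-\mu$; genericity of $\varepsilon$ is what keeps the differences off the relevant faces.

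\textbf{Step 2 (Generation).} To show $\Tscr^\perp=0$ in $D_{\Qch}(W^{ss,\chi}/T)$, I would build a window equivalence $\Cscr_\varepsilon\simeq D^b(W^{ss,\chi}/T)$ following the Halpern-Leistner window strategy. Fully-faithfulness of the restriction on $\Cscr_\varepsilon$ uses the same local-cohomology computation as in Step 1, extended to arbitrary objects in $\Cscr_\varepsilon$. Essential surjectivity is obtained by the standard ``Koszul window-shift'' procedure: any character $\mu\in X(T)$ can be moved into $\Lscr_\varepsilon$ modulo objects supported on the unstable strata $S_\alpha$, by successively replacing $\Oscr_W\otimes \mu$ by the Koszul resolution of the corresponding stratum tensored with an appropriate shift. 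Since such Koszul resolutions are acyclic after restriction to $W^{ss,\chi}$, one concludes that every line bundle (hence every object) on $W^{ss,\chi}/T$ lies in the image of $\Cscr_\varepsilon$.

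\textbf{Main obstacle.} The combinatorial heart of both steps is the assertion that $\Lscr_\varepsilon-\Lscr_\varepsilon$ is contained strictly inside the ``magic window'' cut out by the open inequalities $|\langle \lambda_\alpha,-\rangle|<\eta_\alpha$ for every destabilizing $\lambda_\alpha$. This is where quasi-symmetry (giving central symmetry of the zonotope $\bar{\Sigma}$ and the balanced weight windows $\eta_\alpha$) and genericity of $\chi$ and $\varepsilon$ are both essential: the generic $\chi$ guarantees that each $\lambda_\alpha$ is not parallel to a face of $\bar\Sigma$, and the generic $\varepsilon$ ensures $\Lscr_\varepsilon$ avoids the boundary $\lambda_\alpha$-level sets. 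Verifying this uniformly over all Kempf-Ness strata, independent of the choice of generic $\chi$, is the delicate zonotope geometry that makes the theorem work.
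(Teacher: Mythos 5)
You should first be aware that the paper does not prove Theorem \ref{th:HLS} at all: it is imported verbatim from \cite[Corollary 4.2]{HLSam}, and the note only uses its statement (plus the explicit form of $\Tscr$) in the later proofs. Your outline reproduces the strategy of that cited reference, namely the magic/GIT window formalism built on \cite{SVdB}: a window subcategory of $D^b(W/T)$ spanned by the $\Oscr_W\otimes\mu$, $\mu\in\Lscr_\varepsilon$, full faithfulness of restriction to $W^{ss,\chi}/T$ via local cohomology weight estimates along the Kempf--Ness strata, and essential surjectivity by Koszul window shifts. So in terms of approach you are aligned with the source; one small structural remark is that your Step 1 is subsumed by Step 2: since $W$ is affine, $\Ext^{>0}_{W/T}(\Oscr_W\otimes\mu,\Oscr_W\otimes\mu')=0$ for trivial reasons, so once restriction is fully faithful on the window the Ext-vanishing downstairs is automatic and needs no separate stratum-by-stratum argument. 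Also, for the tilting condition as defined in the paper you need $\Tscr^\perp=0$ in the unbounded category $D_{\Qch}(W^{ss,\chi}/T)$, so after generating $D^b$ you should still invoke compactness of $\Tscr$ and compact generation of $D_{\Qch}$.

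The genuine gap is the one you flag yourself as the ``main obstacle'' and then do not resolve: the combinatorial heart of \cite{HLSam}. Concretely, two things must be proved, not asserted. First, that for quasi-symmetric $W$, every difference of elements of $\Lscr_\varepsilon=X(T)\cap(1/2)\bar{\Sigma}_\varepsilon$ satisfies the \emph{strict} weight inequalities required for the local cohomology vanishing on every destabilizing one-parameter subgroup, uniformly in the generic $\chi$; the closed zonotope only gives the non-strict bounds, and the boundary cases are exactly where the statement can fail, so the half-open trimming $\bar{\Sigma}_\varepsilon=\bigcup_{r>0}\bar{\Sigma}\cap(r\varepsilon+\bar{\Sigma})$ and the genericity of $\varepsilon$ must be used quantitatively, not just invoked as ``convexity/centering properties''. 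Second, essential surjectivity requires that the finitely many line bundles indexed by $\Lscr_\varepsilon$ actually generate the whole window category (equivalently, that $\Lscr_\varepsilon$ has the correct cardinality and position relative to the lattice); this is again a nontrivial lattice-point statement about the half-open zonotope and is precisely where \cite{HLSam} does real work. As it stands your text is an accurate roadmap of the cited proof rather than a proof: the two zonotope lemmas on which everything rests are left unverified.
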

\begin{remark}\label{rem:NCCR} If $W$ is ``generic''
in the
  sense of \cite[Definition 1.3.4]{SVdB}, i.e. the complement of
  $\{x\in W\mid \text{$Tx$ is closed and
    $\operatorname{Stab} x=\{e\}$}\}$ has codimension $\ge
  2$
 then
  $\Lambda=\End_{W^{ss,\chi}/T}(\Tscr)=(\End_k(\bigoplus_{\mu \in
    \Lscr_\varepsilon} \mu)\otimes_k k[W])^T$
  is the NCCR of $W\quot T$ constructed in \cite{SVdB}.
  Assuming that $T$ acts faithfully and that $W$ is
  symplectic, one can always reduce to the case of generic $W$.

If $W$ is not generic then all except for two $T$-weights of $W$ (whose sum is $0$) lie in a hyperplane. Let $W'$ be a vector subspace of $W$ spanned by the weight vectors whose weights lie in this hyperplane. Then $T=T'\times G_m$, where $G_m$ acts trivially on $W'$ and the action of $T'$ is faithful on $W'$. This induces the decomposition of $X(T)$ and $\mathfrak{t}$. 
The corresponding projections onto $X(T')$ and $\mathfrak{t'}$ will be denoted by  $'$. 
We have $W\quot T=W'\quot T'\times_k \AA^1$, and for a generic $\chi$ also $W^{ss,\chi}/T=W^{ss,\chi'}/T'\times_k\AA^1$. Moreover, $\Lscr_{\varepsilon}=\Lscr'_{\varepsilon'}\times \{m\}$ ($\Lscr'_{\varepsilon'}$ with respect to $(W',T')$) for some $m\in \ZZ$. Repeating if necessary we reduce to the case that $W$ is generic.

We also note that the corresponding hypertoric varieties do not change. Let $\mu,\mu'$ correspond to the moment maps associated to $W$, $W'$, resp. Then $\mu^{-1}(\xi)\quot T=\allowbreak\mu'^{-1}(\xi')\quot T'$, $\mu^{-1}(\xi)^{ss,\chi}/ T=\allowbreak\mu'^{-1}(\xi')^{ss,\chi'}/ T'$ (for a generic $\chi$). 
\end{remark}

\section{Proofs} 
\label{sec:proofs}
We revert to the setting of the introduction, i.e.\ $W$ is a  
symplectic
representation of the torus $T$.
Assume $\chi\in X(T)$ is a generic character. As explained in the introduction
$W^{ss,\chi}/T$ is a DM stack which yields a (stacky) crepant 
resolution $\pi:W^{ss,\chi}/T\r W\quot T$ of the Gorenstein variety
\cite[Cororllary 13.3]{St5} $W\quot T$. The fact that the stabilizers of the
$T$-action on $W^{ss,\chi}$ are finite
yields by the defining property of the
moment map that $\mu{\mid} W^{ss,\chi}$ is smooth. Hence for every
$\xi\in \mathfrak{t}^\ast$, $\mu^{-1}(\xi)^{ss,\chi}/T$ is also a
smooth Deligne-Mumford stack.

Since~$T$ is commutative, $\mu^{-1}(\xi)\subset W$ is cut out by a
\emph{$T$-invariant} regular sequence of global sections (see
\cite[Theorem 2]{Vinberg} and \cite[Proposition 9.4]{Schwarz1995}). In
particular $\mu^{-1}(\xi)\quot T$ is cut out by a regular sequence in
$W\quot T$ and the same regular sequence cuts out
$\mu^{-1}(\xi)^{ss,\chi}/T$ in $W^{ss,\chi}/T$.  It follows
from this that $\mu^{-1}(\xi)\quot T$ is Gorenstein and that $\pi$
restricts to a crepant resolution 
$\pi_\xi:\mu^{-1}(\xi)^{ss,\chi}/T\to\allowbreak \mu^{-1}(\xi)\quot
T$.

Finally note that $\mu^{-1}(\xi)\quot T$ is normal. For $\xi=0$ this is shown
in the proof of \cite[Proposition 4.11]{MR3001787}. For
the benefit of the reader we give an elementary proof valid for general $\xi$.
Using Remark
\ref{rem:NCCR} we first reduce to the generic case and in that case we will show that
$\mu^{-1}(\xi)$ is normal (the quotient of a normal variety is normal).  Since
$\mu^{-1}(\xi)$ is Cohen-Macaulay, it suffices by Serre's normality
criterion to prove that $\mu^{-1}(\xi)$ is regular in codimension $1$.
Thus, it is sufficient to prove that codimension in $\mu^{-1}(\xi)$ of the intersection of $\mu^{-1}(\xi)$
with the
non-smooth locus of $\mu$
is $\geq 2$.  We will in fact verify that this codimension is $\ge 3$!

By the defining property of the moment map,
the non-smooth locus of $\mu$ is the locus of points in $W$ whose stabilizer has dimension
$>0$.  This is a union of $T$-invariant subspaces of $W$. Each such
subspace $W'$ is again a symplectic vector space with a faithful
action of a quotient torus $T/T'$ of $T$, where~$T'$ denotes the generic
stabilizer of~$W'$ (hence by hypotheses $\dim T'>0$) and moreover $\mu$ restricts to a moment map for the $T/T'$-action. 
Thus, as above, $W'\cap \mu^{-1}(\xi) $ is cut out by 
a regular sequence of length $\dim T/T'$ in $W'$.
Hence the dimension of $W'\cap \mu^{-1}(\xi)$ is equal to
$c:=\dim W'-\dim T/T'$.

Choose
a filtration $W=W_0\supsetneq W_1\supsetneq\cdots
\supsetneq W_t=W'$ of $T$-invariant symplectic subspaces
such that $\dim W_{i+1}=\dim W_i-2$ and let $T_{i}$ be the
 generic stabilizer of~$W_i$. Write $c_i=\dim W_i-\dim T/T_i=\dim W_i\cap \mu^{-1}(\xi)$.

The dimension of  $T/T_i$ is equal
to the rank of the sublattice of $X(T)$ spanned
by the weights of $W_i$. Hence by genericity we have $\dim T/T_1=\dim T$, and thus $c_1=c_0-2$. Since  $\dim T'>0$
we have  $t\ge 2$.
In addition for $i\ge 1$ we have $c_{i+1}\le c_i-1$ ($\dim W_i$ goes down by $2$ but $\dim T/T_i$ goes at most down by $1$). So we get $c=c_t\le c_0-3$
and we are done.

\begin{proof}[Proofs of Theorem \ref{th:mainth}\eqref{mainth1} and  \ref{th:mainth}\eqref{mainth1.5}]
Let $i:\mu^{-1}(\xi)^{ss,\chi}/T\r W^{ss,\chi}/T$ be the inclusion. We have to prove that $\Tscr_\xi=i^\ast \Tscr$ is a tilting bundle on $\mu^{-1}(\xi)^{ss,\chi}/T$. We have
\[
\Ext^\ast_{\mu^{-1}(\xi)^{ss,\chi}/T}(i^*\Tscr,i^*\Tscr)=
\Ext^\ast_{W^{ss,\chi}/T}(\Tscr,i_\ast i^*\Tscr).
\]
Now $\mu^{-1}(\xi)^{ss,\chi}$ is cut out in $W^{ss,\chi}$ by an invariant regular sequence. Tensoring the corresponding Koszul resolution of $i_\ast \Oscr_{\mu^{-1}(\xi)^{ss,\chi}}$ with $\Tscr$ we obtain a left resolution $K_\bullet$ of $i_\ast i^*\Tscr$ which consists
of direct sums of $\Tscr$. Using the fact that $\Ext^{>0}_{W^{ss,\chi}/T}(\Tscr,\Tscr)=0$ we obtain 
\begin{equation}
\label{eq:kozres}
\Ext^\ast_{W^{ss,\chi}/T}(\Tscr,i_\ast i^*\Tscr)=H^\ast(\Hom_{W^{ss,\chi}/T}(\Tscr,K_{\bullet})).
\end{equation}
Since $K_\bullet$ lives in degree $\le 0$ this implies $\Ext^{>0}_{\mu^{-1}(\xi)^{ss,\chi}/T}(i^*\Tscr,i^*\Tscr)=0$. Since also tautologically  $\Ext^{<0}_{\mu^{-1}(\xi)^{ss,\chi}/T}(i^*\Tscr,i^*\Tscr)=0$
we obtain that the right-hand side of \eqref{eq:kozres} is in fact a resolution of $\Lambda_\xi$, proving Theorem \ref{th:mainth}\eqref{mainth1.5}.

To finish the proof of Theorem \ref{th:mainth}\eqref{mainth1} we have to check the 
 generation property. Assume that $\RHom_{\mu^{-1}(\xi)^{ss,\chi}/T}(i^*\Tscr,\Gscr)\allowbreak =0$ for $\Gscr\in D_{\Qch}(\mu^{-1}(\xi)^{ss,\chi}/T)$. Then by adjunction
$i_\ast \Gscr\in \Tscr^\perp$ and hence $i_\ast\Gscr=0$. It follows that $\Gscr=0$.
\end{proof}
\begin{proof}[Proof of Theorem \ref{th:mainth}\eqref{mainth2}]
  Since $\pi_\xi$ is a crepant resolution, it follows that
  $\Lambda_\xi=\allowbreak\pi_*\uEnd_{\mu^{-1}(\xi)^{ss,\chi}/T}(\Tscr_\xi)$ is an
  NCCR of $\mu^{-1}(\xi)\quot T$ (see e.g. \cite[Corollary 4.3]{SVdB4}
  and Corollary 4.7 in loc.\ cit.\ with its proof, together with  Remark \ref{rem:NCCR}).
\end{proof}
The proof of  Theorem \ref{th:mainth}\eqref{mainth3} will be based on the following
more general criterion.
\begin{proposition}
\label{prop:koszul} Let $\Delta$ be a $\NN$-graded homologically homogeneous $k$-algebra
  \cite{BH} (see also \cite[\S3]{VdB04}, \cite[\S2]{stafford2008}) such that $\Delta_0$ is semi-simple. Assume that $\Delta$ is
finite as a module over a central subring $R$ of Krull-dimension $d$. Then
the following are equivalent.
\begin{enumerate}
\item $\Delta$ is Koszul.
\item The invertible $\Delta$-bimodule $\omega_\Delta:=\Hom_R(\Delta,\omega_R)$ (see \cite[Proposition 2.6]{stafford2008}) is generated as right module in degree
$d$.
\end{enumerate}
Moreover in (2) we may replace ``right'' by ``left''.
\end{proposition}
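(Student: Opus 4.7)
\textbf{Plan for the proof of Proposition \ref{prop:koszul}.} Set $S := \Delta_0$ and let $P_\bullet \to S$ be the minimal graded projective resolution of $S$ as a left $\Delta$-module, with $P_i = \Delta \otimes_S V_i$, so that $V_i = \Tor^\Delta_i(S, S)$. I would first reduce to the case that $\Delta$ is generated in degree $1$ over $S$: Koszulity (1) evidently forces this, and a Hilbert-series comparison using the known generating degree of $\omega_R$ over $R$ shows that (2) does as well. After this reduction, $V_i$ automatically sits in internal degrees $\geq i$, and Koszulity becomes the equality $\deg V_i = i$ for all $i$.

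The heart of the plan is to translate (2) into the statement that $V_d = \Tor^\Delta_d(S, S)$ is concentrated in internal degree exactly $d$. To this end I would use the adjunction $\RHom_\Delta(-, \omega_\Delta) = \RHom_R(-, \omega_R)$ built into $\omega_\Delta = \Hom_R(\Delta, \omega_R)$, together with Matlis duality over $R$ applied to the finite-length $R$-module $S$. This yields that $\RHom_\Delta(S, \omega_\Delta)$ is concentrated in cohomological degree $d$ and equal to $S^\vee$ with a concrete internal shift read off from the grading of $\omega_R$. Since $\omega_\Delta$ is an invertible bimodule, the dualised complex $\Hom_S(V_\bullet, \omega_\Delta) \cong V_\bullet^\vee \otimes_S \omega_\Delta$ is a minimal projective resolution of (a shift of) $S^\vee$ as a right $\Delta$-module. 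The minimal right-module generators of $\omega_\Delta$ therefore live in the top term $V_d^\vee \otimes_S \omega_\Delta$, and tracking the internal shifts shows that $\omega_\Delta$ is right-generated in degree $d$ if and only if $V_d$ is concentrated in internal degree $d$.

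It then remains to show that ``$V_d$ in degree $d$'' is equivalent to the full Koszul condition. One direction is immediate. For the other, I would exploit the (twisted) Frobenius structure on the Ext-algebra $E := \Ext^*_\Delta(S, S)$ coming from homological homogeneity: the Yoneda pairing $E^i \otimes_S E^{d-i} \to E^d$ is non-degenerate up to the Nakayama twist induced by $\omega_\Delta$. Combined with the lower bound $\deg V_i \geq i$ (equivalently $E^i$ in internal degrees $\leq -i$), the hypothesis that $E^d$ is concentrated in internal degree $-d$ forces $E^i$ into internal degree exactly $-i$ for every $i$, which is Koszulity. The final left-right assertion of (2) is then automatic, since (1) is left-right symmetric.

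The main obstacle is the internal-degree bookkeeping in the second paragraph: one must align the graded shifts coming from $\omega_R$, from Matlis duality over $R$, and from the invertible bimodule $\omega_\Delta$, so as to conclude that the generating degree of $\omega_\Delta$ matches the internal degree of $V_d$ on the nose, without a spurious shift. Once that accounting is pinned down, the rigidity argument in the third paragraph is short.
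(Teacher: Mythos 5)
Your plan is essentially a repackaging of the paper's own argument, and its core is sound. The paper likewise (i) identifies $\RHom_\Delta(\s,\omega_\Delta[d])$ with $\s^*$ via local duality, (ii) translates condition (2) into the statement that the top term $P_d$ of the minimal resolution of a simple is generated purely in degree $d$, and (iii) runs a degree squeeze: minimality forces the minimal generator degrees $f_i$ of $P_i$ to increase strictly, so $d=f_d>\cdots>f_0=0$ pins $f_i=i$, and the dual resolution (also minimal, and resolving a twist of the dual simple by \eqref{eq:rhomsimple}) pins the maximal degrees. Your Frobenius-pairing argument on $E=\Ext^*_\Delta(S,S)$ is exactly the Ext-algebra avatar of this squeeze; note that the nondegenerate pairing you invoke is not free of charge -- it amounts to $\RHom_\Delta(\s,\Delta)\cong\s^{\dagger*}(d)[-d]$, which is precisely \eqref{eq:rhomsimple} and requires the same duality input as step (i). Your derivation of (i) via the adjunction $\RHom_\Delta(-,\omega_\Delta)=\RHom_R(-,\omega_R)$ plus Matlis duality over $R$ is a legitimate, somewhat more commutative alternative to the paper's route through rigid dualizing complexes and $R\Gamma_{\Delta_{\ge 1}}$ (you need $\Ext^{>0}_R(\Delta,\omega_R)=0$, i.e.\ that $\Delta$ is maximal Cohen--Macaulay over $R$, which homological homogeneity supplies).

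The one step I would not let stand is the opening ``reduction to $\Delta$ generated in degree $1$''. You give no workable mechanism for deducing this from (2): a Hilbert-series comparison does not detect generator degrees, and generation in degree $1$ (equivalently, $V_1$ concentrated in internal degree $1$) is part of what is being proved, so assuming it up front risks circularity. Fortunately the step is also unnecessary: the lower bound ``$V_i$ in degrees $\ge i$'' needs only minimality of $P_\bullet$ -- the $i$-th syzygy sits inside $\Delta_{\ge 1}P_{i-1}$, and $\Delta_{\ge 1}$ is the graded radical because $\Delta_0$ is semisimple -- not degree-$1$ generation. Drop the reduction, keep the squeeze, and your plan closes along the same lines as the paper.
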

\begin{proof} 
  Note that by \cite[Lemma 2.4(2), Proposition 2.6]{stafford2008}
  $\omega_\Delta[d]$ is a ``rigid dualizing complex'' for
  $\Delta$. Hence by the uniqueness of rigid dualizing complexes
  together with the proof of\footnote{The reference \cite{van1997existence} is concerned
with the case $\Delta_0=k$ but this hypothesis is not used in an essential way.} \cite[Proposition
  8.2(2)]{van1997existence} we obtain as $\Delta$-bimodules:
  $\omega_{\Delta}[d]=R\Gamma_{\Delta_{\ge 1}}(\Delta)^\vee$ where $(-)^\vee$
  denotes the graded $k$-dual and $R\Gamma_{\Delta_{\ge 1}}$ is the derived functor of $\varinjlim_n \RHom_\Delta(\Delta/\Delta_{\ge n},-)$. Moreover by \cite[Theorem 5.1]{van1997existence} for $M\in \allowbreak D(\Gr(\Delta))$ we 
obtain as objects in $D(\Gr(\Delta^\circ))$
\[
\RHom_\Delta(M,\omega_\Delta[d])\cong R\Gamma_{\Delta_{\ge 1}}(M)^\vee\,.
\]
Applying this with a graded simple $\Delta$-module $\s$, concentrated in degree zero we find
\[
\RHom_\Delta(\s,\omega_\Delta[d])=\s^*;
\]
i.e. $\RHom_\Delta(-,\omega_\Delta[d])$ restricts to the canonical bijection between the graded simple left and right $\Delta$-modules,
concentrated in degree zero.

We will now prove $(2)\Rightarrow(1)$.
Since $\omega_\Delta$ is invertible and generated in degree
$d$ we have that $\omega_\Delta\otimes_\Delta \s=\s^{\dagger}(-d)$ where
$\s^\dagger$ is a graded simple $\Delta$-module concentrated in degree zero. Hence we obtain
\begin{equation}
\label{eq:rhomsimple}
\RHom_\Delta(\s,\Delta)=\RHom_\Delta(\omega_\Delta[d]\otimes_\Delta \s,\omega_\Delta[d])= \s^{\dagger *}(d)[-d]
\end{equation}
Let  $P_\bullet=\cdots\to P_{1}\to P_0$ be the minimal graded projective resolution of $\s$ over $\Delta$. As $\Delta$ is homologically homogeneous, 
 $P_\bullet$ is of length $d$, and by \eqref{eq:rhomsimple}
$\Hom_\Delta(P_\bullet,\Delta)[d]$ 
is a projective resolution of $\s^{\dagger *}(d)$. Hence
 $P_{d}=P(\s^\dagger)(-d)$, where $P(\s^\dagger)$ is the graded projective cover of $\s^\dagger$.

 Let $f_i$ be the minimal degree of an element in $P_{i}$.  We claim
 $f_{i-1}<f_i$. To see this note that
the graded Jacobson radical
 of $\Delta$ is equal to $\Delta_{\ge 1}$ (as $\Delta_0$ is semi-simple). Hence 
the image of $d_i:P_i\r P_{i-1}$ is in $\Delta_{\ge 1}P_{i-1}$.

An element $x$ of degree $f_i$
 in $P_{i}$ is not in $\Delta_{\ge 1}P_{i}$ and therefore it is not in the image of $d_{i+1}$. As
$P_\bullet$ is acyclic it follows that $d_i(x)\neq 0$. Writing $d_i(x)=\sum_{j}l_jy_j$ with 
homogeneous
$l_j\in \Delta_{\ge 1}$, $y_j\in P_{i-1}$, we see that some $y_j$ must be non-zero. The fact
$l_j$ has strictly positive degree implies
 $\deg(y_j)<\deg(x)$ which proves our claim.

From the inequalities
\[
d=f_d>f_{d-1}>\cdots>f_1>f_0=0
\]
we obtain $f_i=i$.  

We now use a dual argument. Let $f^\ast_i$ be the maximal degree of a generator of~$P_{i}$. 
Using the dual resolution $\Hom_\Delta(P_\bullet,\Delta)(-d)[d]$ of $\s^\dagger$ (which is also minimal)
we get that $-d+f^\ast_{d-i}=-i$, i.e. $f^\ast_i=i$. Therefore $P_{i}$ is purely generated in degree $i$
and so the resolution $P_\bullet$ is linear.

\medskip

The implication $(1)\Rightarrow (2)$ is similar. From the nature of the minimal resolutions of the
simples we get $\omega_\Delta\otimes_\Delta \s=\s^\dagger(d)$ for all $\s$. This implies that $\omega_\Delta$
is right generated in degree $d$.

\medskip

 Since Koszulity is left right symmetric, the hypothesis of right generation in (2) may indeed be replaced by left
generation.
\end{proof}
\begin{proof}[Proof of Theorem \ref{th:mainth}\eqref{mainth3}]
  Let $s=\dim T$, $2e=\dim W$ and $d=\dim \mu^{-1}(0)\quot T$. Thus
  $d=2e-2s$.  

By Theorem \ref{th:HLS} have $\Lambda_0=(\End_k(\bigoplus_{\mu\in \Lscr_\xi} \mu)\otimes_k k[\mu^{-1}(0)])^T$.
Hence $\Lambda_0$
is $\NN$-graded and its part of degree zero is semi-simple.\footnote{This is the only place where
we use the specific form of $\Tscr$.}
Therefore by Proposition \ref{prop:koszul} we have to prove that $\omega_{\Lambda_0}$ is generated in degree $d$.

Put $R=k[W\quot T]$, $R_0=k[\mu^{-1}(0)\quot T]$.
We have
\[
\omega_{R_0}=\Ext^{s}_R(R_0,\omega_R)=\Ext^{s}_R(R_0,R(-2e))=R_0(2s-2e)=R_0(-d)
\]
where the first equality is the adjunction formula,
the second equality follows from \cite[Corollary 13.3]{St5} and the third equality follows
from the fact that $R_0$ is cut out from $R$ by a quadratic regular sequence of length $s$.
As $\Lambda_0$ is an NCCR by Theorem \ref{th:mainth}\eqref{mainth2} we have $\Hom_{R_0}(\Lambda_0,R_0)=\Lambda_0$.
It follows that $\omega_{\Lambda_0}=\Hom_{R_0}(\Lambda_0,\omega_{R_0})=\Lambda_0(-d)$, finishing the proof.
\end{proof}
\providecommand{\bysame}{\leavevmode\hbox to3em{\hrulefill}\thinspace}
\providecommand{\MR}{\relax\ifhmode\unskip\space\fi MR }
\providecommand{\MRhref}[2]{%
  \href{http://www.ams.org/mathscinet-getitem?mr=#1}{#2}
}
\providecommand{\href}[2]{#2}

\end{document}